\newtheorem{corollary}{Corollary}[section]
\newtheorem{theorem}{Theorem}[section]
\newtheorem{lemma}{Lemma}[section]
\begin{document}
\title{On Dris Conjecture about Odd Perfect Numbers}
\author{Paolo Starni}
\date{}

\maketitle

\begin{abstract}
The Euler's form of odd perfect numbers, if any, is $n=\pi^\alpha N^2$, where $\pi$ is prime, $(\pi,N)=1$ and  $\pi\equiv \alpha\equiv1\pmod{4}$. Dris conjecture  states that $N>\pi^{\alpha}$. We find that $N^2>\frac{1}{2}\pi^{\gamma}$, with $\gamma=max\{\omega(n)-1,\alpha\}$; $\omega(n)\geq 9$ is the number of distinct prime factors of $n$.
\end{abstract}

\section{INTRODUCTION}
Without explicit definitions all the numbers considered in what follows must be taken as strictly positive integers. Let $\sigma(n)$ be the sum of the divisors of a number $n$; $n$ is said to be perfect if and only if $\sigma(n)=2n$. The multiplicative structure of odd perfect numbers, if any, is
\begin{equation}
n=\pi^\alpha N^2
\label{eq:1}
\end{equation}
where $\pi$ is prime, $\pi\equiv \alpha\equiv1\pmod{4}$ and $(\pi , N)=1$ (Euler, cited in \cite[p. 19]{Dickson}); $\pi^{\alpha}$ is called the Euler's factor. From equation (1) and from the fact that the $\sigma$ is multiplicative, it results also 
\begin{equation}
n=\frac{\sigma(\pi^{\alpha})}{2}\sigma(N^2)
\label{eq:2}
\end{equation}
where $\sigma(N^2)$ is odd and $2\Vert \sigma(\pi^{\alpha})$. Many details concerning the Euler's factor and $N^2$ are given, for example, in \cite{Starni1}\cite{Starni2}\cite{Starni3}\cite{MacDaniel}\cite{Shi}.
Regarding the relation between the magnitudo of $N^2$ and $\pi^{\alpha}$ it has been conjectured by Dris that $N>\pi^{\alpha}$ \cite{Dris}. The result obtained in this paper is \emph{a necessary condition for odd perfection} (Theorem 2.1) which provides an indication about Dris conjecture. Indicating with $\omega(n)$ the number of distinct prime factors of $n$, we prove that (Corollary 2.3):\\

$(i)$\hspace{1.1cm}$N^2>\frac{1}{2}\pi^{\gamma}$, where $\gamma=max\{\omega(n)-1, \alpha\}$ \\

\hspace{-0.6cm}Since $\omega(n)\geq9$ (Nielsen, \cite{Nielsen}), it follows:\\

$(i)_{1}$\hspace{0.9cm}$N^2>\frac{1}{2}\pi^8$; this improves the result $N>\pi$ claimed in \cite{Brown} by Brown in his approach to Dris conjecture.\\

\hspace{-0.6cm}Besides\\

$(i)_2$\hspace{0.7cm}    If $\omega(n)-1 >2\alpha$, then $N>\pi^{\alpha}$\\

\hspace{-0.6cm}so that \\

$(i)_3$\hspace{0.7cm} If $\omega(n)-1 > 2\alpha$ for each odd perfect number $n$, then Dris conjecture is true.\\

Now, some questions arise: $\omega(n)$ depends on $\alpha$? Is there a maximum value of $\alpha$? The minimum value of $\alpha$ is $1$? The only possible value of $\alpha$ is 1 (Sorli,  \cite[conjecture 2]{Sorli}) so that Dris conjecture is true? Without ever forgetting the main question: do odd perfect numbers exist?


\section{THE PROOF}
Referring to an odd perfect number $n$ with the symbols used in equation \eqref{eq:1}, we obtain:
\begin{lemma}
If $n$ is an odd perfect number, then 
\begin{equation*}
N^2=A\frac{\sigma(\pi^{\alpha})}{2} \hspace{0.2cm}and\hspace{0.2cm} \sigma(N^2)= A\pi^{\alpha}
\end{equation*}
\label{lemma}
\end{lemma}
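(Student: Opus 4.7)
The plan is to combine equations \eqref{eq:1} and \eqref{eq:2} to get the identity
\[
\pi^{\alpha} N^{2} \;=\; \frac{\sigma(\pi^{\alpha})}{2}\,\sigma(N^{2}),
\]
and then use a coprimality argument to force $\pi^{\alpha}$ to divide $\sigma(N^{2})$. The integer $A$ in the statement is defined as this quotient.

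More concretely, first I would recall that $\sigma(\pi^{\alpha})=1+\pi+\pi^{2}+\cdots+\pi^{\alpha}\equiv 1\pmod{\pi}$, so $\gcd(\pi,\sigma(\pi^{\alpha}))=1$, and since $2\Vert \sigma(\pi^{\alpha})$ (as noted after equation \eqref{eq:2}) we also have $\gcd(\pi^{\alpha},\sigma(\pi^{\alpha})/2)=1$. Next I would read the displayed identity modulo $\pi^{\alpha}$: the left-hand side is divisible by $\pi^{\alpha}$, while on the right the factor $\sigma(\pi^{\alpha})/2$ is coprime to $\pi$, so $\pi^{\alpha}\mid \sigma(N^{2})$. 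Defining $A:=\sigma(N^{2})/\pi^{\alpha}\in\mathbb{Z}_{>0}$ gives the second asserted equality $\sigma(N^{2})=A\pi^{\alpha}$.

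Finally I would substitute this expression for $\sigma(N^{2})$ back into the identity above: $\pi^{\alpha}N^{2}=\frac{\sigma(\pi^{\alpha})}{2}\cdot A\pi^{\alpha}$. Cancelling $\pi^{\alpha}$ (which is legal since $\pi\ne 0$) yields $N^{2}=A\,\sigma(\pi^{\alpha})/2$, the first asserted equality.

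There is no real obstacle here; the only content is the coprimality observation $\gcd(\pi,\sigma(\pi^{\alpha}))=1$, which together with the fact that $\sigma(\pi^{\alpha})/2$ is an integer lets one extract the common factor $\pi^{\alpha}$ from $\sigma(N^{2})$. The remainder is algebraic manipulation of \eqref{eq:1} and \eqref{eq:2}.
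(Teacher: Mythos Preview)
Your argument is correct and follows essentially the same route as the paper: combine \eqref{eq:1} and \eqref{eq:2}, use the coprimality $(\pi^{\alpha},\sigma(\pi^{\alpha}))=1$ to conclude $\pi^{\alpha}\mid\sigma(N^{2})$, define $A=\sigma(N^{2})/\pi^{\alpha}$, and read off $N^{2}=A\,\sigma(\pi^{\alpha})/2$. The paper's proof is slightly terser but relies on exactly the same coprimality observation and the same definition of $A$.
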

\begin{proof}
 From equation \eqref{eq:2} and from the fact that $(\sigma(\pi^{\alpha}),\pi^{\alpha})=1$, it follows
 \begin{equation}
 N^2=A\frac{\sigma(\pi^{\alpha})}{2}
 \end{equation}
 where $A$ is an odd positive integer given by
  \begin{equation}
  A=\frac{\sigma(N^2)}{\pi^{\alpha}}
  \end{equation}
\end{proof}
In relation to the odd parameter $A$ in Lemma 2.1, we give two further lemmas:
\begin{lemma}
If $A=1$, then
$\alpha\geq \omega(n)-1 \hspace{0.1cm}and\hspace{0.1cm} N^2>\frac{1}{2}\pi^{\alpha}$
\end{lemma}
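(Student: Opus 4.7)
The plan is to use Lemma 2.1 with the hypothesis $A=1$, which collapses it to the two identities $N^2 = \sigma(\pi^\alpha)/2$ and $\sigma(N^2) = \pi^\alpha$. The inequality $N^2 > \frac{1}{2}\pi^\alpha$ is then essentially free: since $\sigma(\pi^\alpha) = 1 + \pi + \pi^2 + \cdots + \pi^\alpha > \pi^\alpha$, dividing by $2$ yields the claim immediately.

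For the bound $\alpha \ge \omega(n)-1$, the key observation is that $\sigma(N^2) = \pi^\alpha$ forces every multiplicative contribution to $\sigma(N^2)$ to be a power of the single prime $\pi$. Writing the canonical factorization $N^2 = \prod_{i=1}^{s} p_i^{2\beta_i}$ where $s = \omega(N^2)$, multiplicativity of $\sigma$ gives
\begin{equation*}
\pi^\alpha \;=\; \sigma(N^2) \;=\; \prod_{i=1}^{s} \sigma(p_i^{2\beta_i}).
\end{equation*}
Because $\pi$ is prime, each factor on the right must itself be a power $\pi^{k_i}$; and since each $\sigma(p_i^{2\beta_i}) = 1 + p_i + \cdots + p_i^{2\beta_i} \ge 1 + p_i > 1$, we have $k_i \ge 1$. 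Summing exponents gives $\alpha = \sum_{i=1}^{s} k_i \ge s$.

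It then remains to identify $s$ with $\omega(n)-1$. This follows from the Eulerian form $n = \pi^\alpha N^2$ combined with the coprimality condition $(\pi, N)=1$: the primes dividing $n$ are exactly $\pi$ together with the primes dividing $N$ (equivalently $N^2$), so $\omega(n) = 1 + \omega(N^2) = 1 + s$. Substituting yields $\alpha \ge \omega(n)-1$, as required.

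I do not anticipate a serious obstacle here; the only subtlety is the justification that each $\sigma(p_i^{2\beta_i})$ is at least $\pi^1$ (and not $\pi^0 = 1$), which is handled by noting that a sum of at least $1+p_i$ positive terms exceeds $1$. Everything else is bookkeeping with the multiplicativity of $\sigma$ and the structural identity $\omega(n) = \omega(N^2) + 1$ coming from $(\pi,N)=1$.
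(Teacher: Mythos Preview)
Your proof is correct and follows essentially the same approach as the paper: both use $A=1$ in Lemma~2.1 to obtain $\sigma(N^2)=\pi^\alpha$ and $N^2=\tfrac{1}{2}\sigma(\pi^\alpha)$, factor $\sigma(N^2)$ multiplicatively into prime-power contributions each of which must be a positive power of $\pi$, and then count exponents to get $\alpha\ge\omega(N)=\omega(n)-1$. Your write-up is in fact slightly more explicit than the paper's, since you justify why each exponent $k_i\ge1$ (via $\sigma(p_i^{2\beta_i})\ge 1+p_i>1$), a point the paper leaves implicit.
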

\begin{proof}
Let $q_k, k=1, 2, ...,\omega(N)=\omega (N^2)$, are  the prime factors of $N^2$; from hypothesis and from (4) we have
\begin{equation*}
 \pi^{\alpha}=\sigma(N^2)=\sigma(\prod_{k=1}^{\omega(N)}q_k^{2\beta_k})=\prod_{k=1}^{\omega(N)}\sigma(q_k^{2\beta_k})=\prod_{k=1}^{\omega(N)}\pi^{\delta_k}
 \end{equation*}
 in which $\alpha=\sum_{k=1}^{\omega(N)}\delta_k\geq \sum_{k=1}^{\omega(N)}1_k=\omega(N)$.\\ 
 \vspace{0.05cm}\\
 Since $\omega(n)=\omega(N)+1$, it results
 \begin{equation*}
 \alpha\geq \omega(n)-1
 \end{equation*}
 Besides, from Equation (3) it follows
\begin{equation*} 
N^2=\frac{1}{2}\sigma(\pi^{\alpha})>\frac{1}{2}\pi^{\alpha}
\end{equation*}
\end{proof}
\begin{lemma}
If $A>1$, then
$N^2>\frac{3}{2}\pi^{\alpha}$
\end{lemma}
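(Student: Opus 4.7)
The plan is to derive the bound directly from Lemma 2.1, exploiting the parity of the auxiliary parameter $A$. Recall that Lemma 2.1 gives the identity $N^2 = A\,\sigma(\pi^{\alpha})/2$, and its proof explicitly records that $A$ is an odd positive integer (being the quotient $\sigma(N^2)/\pi^{\alpha}$, and $\sigma(N^2)$ was already noted as odd in the discussion of equation \eqref{eq:2}). This parity observation is the whole point of the lemma I am about to prove.

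First I would observe that, since $A$ is odd and $A>1$ by hypothesis, one has the sharp lower bound $A\geq 3$. Substituting into the identity of Lemma 2.1 yields
\begin{equation*}
N^2 \;=\; A\,\frac{\sigma(\pi^{\alpha})}{2} \;\geq\; \frac{3}{2}\,\sigma(\pi^{\alpha}).
\end{equation*}
Next I would use the elementary estimate $\sigma(\pi^{\alpha}) = 1 + \pi + \pi^{2} + \cdots + \pi^{\alpha} > \pi^{\alpha}$, which holds because $\alpha\geq 1$ and all summands are positive. Chaining the two inequalities gives $N^2 > \tfrac{3}{2}\pi^{\alpha}$, which is the desired conclusion.

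There is no real obstacle here: the entire content of the lemma is the jump from the trivial bound $A\geq 1$ (which would only give $N^2 > \tfrac{1}{2}\pi^{\alpha}$, matching the $A=1$ case of Lemma 2.2) to $A\geq 3$, made possible by the oddness of $A$. The only thing to be careful about is to cite, or re-verify in one line, that $A$ is odd, so that the step $A>1 \Rightarrow A\geq 3$ is legitimate rather than merely $A\geq 2$.
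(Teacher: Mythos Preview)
Your argument is correct and matches the paper's proof essentially line for line: from $A$ odd and $A>1$ you get $A\geq 3$, hence $N^2\geq\tfrac{3}{2}\sigma(\pi^{\alpha})>\tfrac{3}{2}\pi^{\alpha}$. If anything, you are more explicit than the paper about \emph{why} $A\geq 3$ holds (the oddness of $A$), which the paper leaves to the reader.
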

\begin{proof}
From Equation (3) it results $A\geq3$. Thus
\begin{equation*}
N^2\geq\frac{3}{2}\sigma(\pi^{\alpha})>\frac{3}{2}\pi^{\alpha}
\end{equation*}
\end{proof}
The following theorem summarizes a necessary condition for odd perfection.
\begin{theorem}
If $n$ is an odd perfect number, then
\begin{equation*}
(\neg a\land d)\lor(a\land b\land c)\lor (b\land c\land d)
\end{equation*}
\label{th1}
where: $a\cong (A=1)$,$\neg a\cong (A>1)$, $b\cong (\alpha\geq \omega(n) -1)$, $c\cong (N^2>\frac{1}{2}\pi^{\alpha})$, $d\cong (N^2>\frac{3}{2}\pi^{\alpha})$
\end{theorem}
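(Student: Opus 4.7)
The plan is to proceed by a direct case analysis on the odd integer $A$ introduced in Lemma~2.1, since $A$ is a positive odd integer and therefore satisfies either $A=1$ or $A\geq 3$. These two cases correspond exactly to the propositions $a$ and $\neg a$, so together they exhaust all possibilities for an odd perfect number $n$. The theorem's disjunction will then follow by checking which of the conjuncts $b$, $c$, $d$ is forced in each case.

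In the first case, I would assume $a$, i.e.\ $A=1$, and invoke Lemma~2.2 directly. That lemma yields precisely the two conclusions $\alpha \geq \omega(n)-1$ and $N^2 > \tfrac{1}{2}\pi^{\alpha}$, which are the propositions $b$ and $c$. Hence the middle disjunct $a \land b \land c$ is satisfied, and the overall disjunction holds.

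In the second case, I would assume $\neg a$, i.e.\ $A>1$, and invoke Lemma~2.3 to obtain $N^2 > \tfrac{3}{2}\pi^{\alpha}$, which is proposition $d$. This immediately gives the first disjunct $\neg a \land d$. (As a side remark, $d$ trivially implies $c$, but $b$ need not hold in this case, which is why the third disjunct $b \land c \land d$ is offered as an alternative rather than being forced; the statement is phrased as a disjunction precisely to keep it neutral about $b$ when $A>1$.)

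There is no real obstacle: the argument is purely a dichotomy on the parity-constrained integer $A$, and each branch is handed to us by one of the two preceding lemmas. The only thing to be careful about is the logical bookkeeping, namely that in the $A=1$ branch we use Lemma~2.2 to supply both $b$ and $c$ simultaneously, and in the $A>1$ branch we rely on Lemma~2.3 alone to supply $d$, with $\neg a$ coming from the case hypothesis.
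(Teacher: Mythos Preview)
Your proof is correct and follows essentially the same approach as the paper: both invoke Lemmas~2.2 and~2.3 on the dichotomy $A=1$ versus $A>1$. The only cosmetic difference is that the paper rewrites the two implications $(a\Rightarrow b\land c)$ and $(\neg a\Rightarrow d)$ as $[\neg a\lor(b\land c)]\land(a\lor d)$ and then distributes to obtain the three-term disjunction, whereas you verify one disjunct directly in each case.
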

\begin{proof}
We combine Lemmas 2.2 and 2.3 setting  
\begin{equation}
\left\{
\begin{array}
[c]{c} 
lemma\hspace{0.1cm}2.2: (a\implies b\land c)\\
\hspace{-0.35cm}lemma\hspace{0.1cm} 2.3: (\neg a\implies d)
\end{array}  
\right.
\end{equation}
where, since it cannot be $A<1$,  it is $ (a)\cong (A=1)$ and $(\neg a)\cong (A>1)$. One obtains from (5)
\begin{equation*}
[\neg a\lor(b\land c)]\land (a\lor d)
\end{equation*}
which is equivalent to
\begin{equation}
(\neg a\land d)\lor (a\land b \land c)\lor (b\land c \land d)
\end{equation}
\end{proof}

Considering cases in which the necessary condition for odd perfection (6) is false, we obtain the following corollaries:
\begin{corollary}
If $n$ is an odd perfect number, then $N^2>\frac{1}{2}\pi^{\alpha}$
\end{corollary}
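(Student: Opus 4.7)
The plan is to derive the corollary directly from Theorem~2.1 by observing that the statement $c$ (namely $N^2 > \tfrac{1}{2}\pi^\alpha$) is implied by each of the three disjuncts in the necessary condition~(6), so $c$ holds unconditionally for any odd perfect number.

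First I would record the trivial but crucial arithmetic fact that $d$ entails $c$: since $\tfrac{3}{2}\pi^\alpha > \tfrac{1}{2}\pi^\alpha$, the inequality $N^2 > \tfrac{3}{2}\pi^\alpha$ immediately yields $N^2 > \tfrac{1}{2}\pi^\alpha$. In symbols, $d \implies c$. This is the only nontrivial observation; the rest is propositional logic.

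Next I would apply Theorem~2.1. Suppose $n$ is an odd perfect number; then by~(6) at least one of the disjuncts $(\neg a \land d)$, $(a\land b\land c)$, $(b\land c\land d)$ holds. In the first case we have $d$, hence $c$ by the observation above. In the second case $c$ is one of the listed conjuncts. In the third case $c$ again appears as a conjunct (and $d$ would give it as well). In every case $c$ holds, i.e.\ $N^2 > \tfrac{1}{2}\pi^\alpha$, which is the claim.

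There is no substantive obstacle here: the corollary is essentially a logical simplification of Theorem~2.1, exploiting that the stronger bound $d$ contains the weaker bound $c$. The only thing to be careful about is to state explicitly the implication $d \implies c$ before collapsing the three disjuncts, so the reader sees that the disjunction of the three cases uniformly delivers $c$.
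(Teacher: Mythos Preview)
Your proof is correct and follows essentially the same idea as the paper: both arguments exploit that every disjunct in~(6) contains either $c$ or $d$, together with the trivial implication $d\implies c$. The paper phrases this via the contrapositive (noting that $\neg c\land\neg d$ falsifies~(6), so $n$ cannot be odd perfect), whereas you do a direct case split on the three disjuncts; these are logically equivalent presentations of the same observation.
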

\begin{proof}
We have\\

\hspace{-0.6cm}(7)\hspace{0.3cm} $(\neg c\land \neg d) (\cong\ N^2<\frac{1}{2}\pi^{\alpha})\implies n$ is not an odd perfect number\\

\hspace{-0.6cm}From the contrapositive formulation of (7) it follows the proof. 
\end{proof}
\begin{corollary}
If $n$ is an odd perfect number, then
\begin{equation*}
N^2>\frac{3}{2}\pi^{\omega(n)-1}>\frac{1}{2}\pi^{\omega(n)-1}
\end{equation*}
\end{corollary}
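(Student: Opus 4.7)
The plan is to mirror the contrapositive structure used for Corollary 2.2: invoke Theorem 2.1 and examine its three disjuncts, showing that in each one the sharper inequality $N^2 > \frac{3}{2}\pi^{\omega(n)-1}$ is forced. The trailing inequality $\frac{3}{2}\pi^{\omega(n)-1} > \frac{1}{2}\pi^{\omega(n)-1}$ is immediate, so the entire corollary reduces to the sharp bound.

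The disjunct $b\land c\land d$ is handled at a stroke: $b$ gives $\alpha \geq \omega(n)-1$, $d$ gives $N^2 > \frac{3}{2}\pi^\alpha$, hence $N^2 > \frac{3}{2}\pi^{\omega(n)-1}$. For the disjunct $a\land b\land c$ (the regime $A=1$), Lemma 2.2 supplies both $\alpha \geq \omega(n)-1$ and the exact identity $N^2 = \frac{1}{2}\sigma(\pi^\alpha)$. Using $\pi\geq 5$, the congruence $\alpha\equiv 1\pmod{4}$, and splitting on whether $\alpha \geq \omega(n)$ or $\alpha=\omega(n)-1$, I would extract the factor $\frac{3}{2}$ from the geometric-sum expansion of $\sigma(\pi^\alpha)$; in the easy sub-case $\alpha\geq\omega(n)$ one immediately gets $N^2 > \frac{1}{2}\pi^\alpha \geq \frac{5}{2}\pi^{\omega(n)-1}$, and the marginal configuration $\alpha=\omega(n)-1$ is sharpened by the mod-$4$ constraint.

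The main obstacle lies in the disjunct $\neg a\land d$ ($A>1$), where Lemma 2.3 alone gives $N^2 > \frac{3}{2}\pi^\alpha$ but no direct control on whether $\alpha \geq \omega(n)-1$. Here I would try a contrapositive: assume $N^2 \leq \frac{3}{2}\pi^{\omega(n)-1}$, combine it with $N^2 = A\sigma(\pi^\alpha)/2$ and $\sigma(N^2) = A\pi^\alpha$ from Lemma 2.1 (noting that $A$ is odd and $\geq 3$), and exploit $\omega(N^2)=\omega(n)-1$ to force a contradiction with the multiplicative structure dictated by $\sigma(N^2)=A\pi^\alpha$. Stitching this reduction together with the two easier disjuncts above then delivers the corollary, after which the weaker inequality in the statement is just $\frac{3}{2}>\frac{1}{2}$.
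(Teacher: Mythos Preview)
The paper's proof is a single contrapositive step: it notes that $\neg b \land \neg d$ kills every disjunct of Theorem~2.1 and then identifies $\neg b \land \neg d$ with the inequality $N^2 < \frac{3}{2}\pi^{\omega(n)-1}$, concluding by contraposition. Your disjunct-by-disjunct analysis is a different, more explicit route, but it leaves two genuine gaps.

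In the disjunct $\neg a \land d$ you only obtain $N^2 > \frac{3}{2}\pi^{\alpha}$ from Lemma~2.3, and nothing in the paper ties $\alpha$ to $\omega(n)$ when $A>1$; your proposed contradiction argument is not carried out, and Lemma~2.1 by itself offers no evident mechanism for producing one. In the disjunct $a \land b \land c$, the marginal sub-case $\alpha = \omega(n)-1$ is not rescued by $\alpha \equiv 1 \pmod 4$: that congruence merely forces $\omega(n)\equiv 2\pmod 4$ rather than excluding the equality, and in that sub-case the exact identity $N^2 = \frac{1}{2}\sigma(\pi^{\alpha})$ gives
\[
N^2 \;<\; \frac{1}{2}\cdot\frac{\pi}{\pi-1}\,\pi^{\alpha} \;\le\; \frac{5}{8}\,\pi^{\omega(n)-1},
\]
which is strictly below the target $\frac{3}{2}\pi^{\omega(n)-1}$. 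So the geometric-sum expansion of $\sigma(\pi^{\alpha})$ cannot supply the factor $\frac{3}{2}$ there, and your sketch does not close.
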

\begin{proof}
We have\\

\hspace{-0.6cm}(8)\hspace{0.3cm} $(\neg b\land \neg d) (\cong\ N^2<\frac{3}{2}\pi^{\omega(n)-1})\implies n$ is not an odd perfect number\\

\hspace{-0.6cm}From the contrapositive formulation of (8) it follows the proof. 
\end{proof}
Combining these two corollaries, we have
\begin{corollary}
If $n$ is an odd perfect number, then
\begin{equation*}
N^2>\frac{1}{2}\pi^{\gamma}, where\hspace{0.1cm} \gamma=max\{ \omega(n)-1,\alpha\}
\end{equation*}
\end{corollary}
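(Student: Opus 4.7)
The plan is to derive Corollary 2.3 as a direct case split on which of the two quantities $\omega(n)-1$ and $\alpha$ is larger, applying the appropriate earlier corollary in each case.

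First I would recall what the two preceding corollaries give us: Corollary 2.1 asserts $N^2 > \tfrac{1}{2}\pi^{\alpha}$ unconditionally (for any odd perfect $n$), while Corollary 2.2 asserts the stronger-looking bound $N^2 > \tfrac{3}{2}\pi^{\omega(n)-1}$, which in particular implies $N^2 > \tfrac{1}{2}\pi^{\omega(n)-1}$. Together, these say that $N^2$ exceeds $\tfrac{1}{2}\pi^{e}$ for each of the two exponents $e \in \{\alpha,\ \omega(n)-1\}$ separately.

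Next I would perform the trivial case split. If $\alpha \geq \omega(n)-1$, then $\gamma = \alpha$, and Corollary 2.1 yields $N^2 > \tfrac{1}{2}\pi^{\alpha} = \tfrac{1}{2}\pi^{\gamma}$. If instead $\omega(n)-1 > \alpha$, then $\gamma = \omega(n)-1$, and Corollary 2.2 yields $N^2 > \tfrac{3}{2}\pi^{\omega(n)-1} > \tfrac{1}{2}\pi^{\gamma}$. Either way, $N^2 > \tfrac{1}{2}\pi^{\gamma}$, which is exactly the conclusion sought.

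There is no real obstacle here: the entire content of this corollary lies in the preceding Corollaries 2.1 and 2.2, and the passage to the maximum is a one-line logical step. The only thing to be careful about is the definition of $\gamma$ as a \emph{maximum} (not a minimum), so that the larger of the two available lower bounds always applies; since both bounds hold simultaneously, taking the max on the exponent is legitimate because $\pi > 1$ implies $\pi^{e}$ is monotone in $e$.
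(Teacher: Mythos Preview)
Your proposal is correct and matches the paper's approach exactly: the paper's own proof is simply ``Immediate,'' and what you have written is precisely the spelling-out of that immediacy---a case split on whether $\gamma=\alpha$ or $\gamma=\omega(n)-1$, invoking Corollary~2.1 or Corollary~2.2 respectively.
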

\begin{proof}
Immediate.
\end{proof}



\end{document}